 \DeclareMathOperator{\sym}{Sym}
 \DeclareMathOperator{\soc}{soc}
 \DeclareMathOperator{\out}{Out}
 \DeclareMathOperator{\frat}{Frat}
\DeclareMathOperator{\GL}{GL}
\renewcommand{\emptyset}{\varnothing}
\newcommand{\sg}{\sigma}
\newcommand{\Om}{\Omega}
\newcommand{\cm}{\mathcal M}
\newtheorem{thm}{Theorem}
 \newtheorem{lemma}[thm]{Lemma}
\newtheorem{prop}[thm]{Proposition}
\numberwithin{equation}{section}
\renewcommand{\footnote}{\endnote}
\newcommand{\ignore}[1]{}\makeglossary
\begin{document}
\bibliographystyle{amsplain}
\title{Direct products of finite groups\\ as unions of proper subgroups}
\author{Martino Garonzi}
\address{Dipartimento di Matematica Pura ed Applicata \\
Via Trieste 63, 35121 Padova, Italy.}
\email{mgaronzi@math.unipd.it}
\author{Andrea Lucchini}
\address{Dipartimento di Matematica Pura ed Applicata \\
Via Trieste 63, 35121 Padova, Italy.}
\email{lucchini@math.unipd.it}
\thanks{Research partially supported
by MIUR-Italy via PRIN "Group theory and applications"}
\subjclass{20D60}

\keywords{Covers; maximal subgroups; direct products}

\date{}

\begin{abstract}
We determine all the ways in which a direct product of two finite groups can be expressed as the set-theoretical union of  proper subgroups  in a family of
minimal cardinality.
\end{abstract}

\maketitle


\section{Introduction}

If $G$ is a non cyclic finite group, then there exists a finite collection of proper subgroups whose set-theoretical union is all of $G;$ such a collection is called a {\sl{cover}} for $G.$ A minimal cover is one of least cardinality and the size of a minimal cover of $G$ is denoted by $\sigma(G)$ (and for convenience we shall write $\sigma(G)=\infty$ if $G$ is cyclic). The study of  minimal covers was introduced
by J.H.E. Cohn \cite{cohn}. If $N$ is a normal subgroup of $G,$
then $\sigma(G)\leq \sigma(G/N);$ indeed a cover of $G/N$ can be lifted to a cover of $G.$
In particular, as it was noticed in \cite{cohn}, if $G=H_1\times H_2$ is the direct product of two finite groups,
then $\sigma(H_1\times H_2)\leq \min \{\sigma(H_1),\sigma(H_2)\}.$ It is easy to prove that if $|H_1|$ and $|H_2|$ are coprime numbers, then $\sigma(H_1\times H_2) = \min \{\sigma(H_1),\sigma(H_2)\}$ (see \cite[Lemma 4]{cohn}). The situation is different if $|H_1|$ and $|H_2|$ are not coprime; for example if $H_1\cong H_2\cong C_p$ are cyclic groups of order $p$ and $p$ is a prime, then $\sigma(H_1\times H_2)=p+1$
since a minimal cover of $H_1\times H_2$ must contain all the $p+1$ maximal subgroups. Some partial results
are contained in \cite{cubo}.
In this paper we obtain a complete and general answer to the question how $\sigma(H_1\times H_2)$ is related with $\sigma(H_1)$ and $\sigma(H_2).$
\begin{thm}
If $G=H_1 \times H_2$ is the direct product of two finite groups, then either $\sg(G)=\min \{ \sg(H_1), \sg(H_2)\}$ or $\sg(G)=p+1$ and the cyclic group of order $p$ is a homomorphic image of both    $H_1$ and  $H_2$.
\end{thm}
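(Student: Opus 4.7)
The inequality $\sg(G)\le\min\{\sg(H_1),\sg(H_2)\}$ is already granted, so the theorem splits into two claims: (a) if $C_p$ is a common quotient of $H_1$ and $H_2$ then $\sg(G)\le p+1$, and (b) if $\sg(G)<\min\{\sg(H_1),\sg(H_2)\}$ then a common $C_p$ quotient exists and $\sg(G)$ equals exactly $p+1$. Claim (a) is immediate: composing the surjection $G\twoheadrightarrow C_p\times C_p$ with the $p+1$ maximal subgroups of $C_p\times C_p$ (whose union is $C_p\times C_p$) yields a cover of $G$ by $p+1$ maximal subgroups. So all the real work lies in (b).

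For (b) I would use the classical trichotomy of maximal subgroups of $G=H_1\times H_2$: each maximal $M$ is either of type (i) $K\times H_2$ with $K$ maximal in $H_1$, or (ii) $H_1\times K$ with $K$ maximal in $H_2$, or (iii) a \emph{diagonal} $\{(x,y):\pi_1(x)=\alpha(\pi_2(y))\}$ coming from a common simple quotient $S$ of $H_1,H_2$ via surjections $\pi_i:H_i\twoheadrightarrow S$ and some $\alpha\in\aut(S)$. Let $\mathcal M$ be a minimal cover of $G$ of size $n<\min\{\sg(H_1),\sg(H_2)\}$ and split $\mathcal M=\mathcal M_1\sqcup\mathcal M_2\sqcup\mathcal D$ by type. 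A first step is to show that all three pieces are nonempty. Indeed, if $\mathcal M_2=\emptyset$, evaluating the cover at $(x,1)$ as $x$ ranges over $H_1$ shows that $H_1=\bigcup\{K:K\times H_2\in\mathcal M_1\}\cup\bigcup\{\ker\pi_1^D:D\in\mathcal D\}$, a cover of $H_1$ by at most $n$ proper subgroups, contradicting $n<\sg(H_1)$. Symmetrically $\mathcal M_1\ne\emptyset$, and if $\mathcal D=\emptyset$ one checks that $\mathcal M_1$ covers $H_1$ or $\mathcal M_2$ covers $H_2$. So $\mathcal D$ contains at least one diagonal $D_0$ with data $(S,N_1,N_2)$; the natural move is to project to $G\twoheadrightarrow S\times S$ and analyse the induced cover.

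The crux—and the main obstacle—is to prove that the quotient $S$ must be cyclic of prime order $p$, that all diagonals in $\mathcal M$ arise from a single pair $(N_1,N_2)$, and that all $|\aut(C_p)|=p-1$ of them appear; combined with $|\mathcal M_1|,|\mathcal M_2|\ge 1$ this yields $n\ge 1+1+(p-1)=p+1$, which together with (a) gives equality. The algebraic feature singling out $C_p$ among finite simple groups is that $\aut(C_p)$ acts transitively on $C_p\setminus\{e\}$, which is exactly what permits $p-1$ diagonals associated to a fixed $(N_1,N_2)$ to cover the entire ``cross fiber'' $(S\setminus\{e\})\times(S\setminus\{e\})$ in $S\times S$. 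For a non-abelian simple $S$ this transitivity fails (elements of distinct orders lie in distinct $\aut(S)$-orbits), so the diagonals in $\mathcal M$ cannot cover all cross fibers of the induced cover of $S\times S$, and the shortfall must be absorbed by elements of $\mathcal M_1\cup\mathcal M_2$, inflating $n$ beyond $\min\{\sg(H_1),\sg(H_2)\}$. The technical heart is quantifying this shortfall when $\mathcal D$ is allowed to mix diagonals coming from several distinct common simple quotients, or from several $(N_1,N_2)$-pairs within a single $S$; I expect a replacement argument, exchanging any inefficient family of diagonals for the canonical $C_p\times C_p$ family attached to a single pair, to close the proof.
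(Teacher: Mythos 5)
Your reductions are sound and in fact track the paper's own strategy: claim (a) is the easy direction, the nonemptiness of $\mathcal M_1$, $\mathcal M_2$, $\mathcal D$ under the hypothesis $n<\min\{\sg(H_1),\sg(H_2)\}$ is argued correctly, and the mechanism you identify for non-abelian diagonals (for a fixed uncovered $x\in H_1$, the second coordinates of the points of a diagonal lying over $x$ land in the proper subgroup $\langle\phi(\pi_1(x))\rangle$ of the non-abelian simple quotient, so the diagonals together with $\mathcal M_2$ restrict to a cover of $H_2$ by proper subgroups) is exactly the paper's construction of the subgroups $U(a,b,\phi)$. But the theorem is not proved, because the step you yourself flag as ``the crux'' is left entirely open, and it is by far the hardest part of the argument. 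Specifically, you must show that if even one member of a minimal cover is contained in a diagonal maximal subgroup of prime index $p$, then $\sg(G)=p+1$ and every member of the cover is normal of index $p$ containing a fixed $N_1\times N_2$ with $G/(N_1\times N_2)\cong C_p\times C_p$; only then does your count $1+1+(p-1)=p+1$ become available. A ``replacement argument, exchanging any inefficient family of diagonals for the canonical $C_p\times C_p$ family'' is not available in any obvious form: the problematic configurations are covers that mix prime-index diagonals with non-abelian diagonals and with standard subgroups of large index, and nothing a priori forces such a mixed cover to have at least $p+1$ members or to collapse onto a single pair $(N_1,N_2)$.

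In the paper this step is Lemma \ref{diagon} together with the proposition that follows it, and it is a genuine piece of work: an induction on $|G|$ reducing to $\frat G=1$, a case division on whether $p$ divides $|Z(G)|$, identification of $C_G(\soc G)=Z(\soc G)$ via the Bender $F^*$-theorem, Tomkinson's index lemma (Lemma \ref{lemmatom}) both to get the bound $\beta_1\le k$ and to control pairwise intersections of the complements, and the Aschbacher--Guralnick bound showing that if $N\cong S^r$ is a non-abelian minimal normal subgroup with $p$ dividing the order of a chief factor of $G/C_G(N)$, then every proper subgroup supplementing $N$ has index greater than $2p$ (respectively at least $\min\{2^rq^r,|S|^{r/2}\}$), which is what excludes non-abelian socle components. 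Your heuristic that $\aut(S)$-orbits on $S\setminus\{e\}$ are too small for non-abelian $S$ points in the right direction, but quantifying the resulting shortfall is precisely where these nontrivial inputs are required; as written, the proposal asserts the conclusion of the hard lemma rather than proving it.
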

This is a consequence of a more general result, describing all the possible minimal covers of a direct product.
\begin{thm}\label{main}
Let $\cal M$ be a minimal cover of  a direct product $G=H_1 \times H_2$ of two finite groups.
Then one of the following holds:
\begin{enumerate}
\item  $\cal M=\{X\times H_2\mid X\in \cal X\}$ where $\cal X$ is a minimal cover of $H_1.$ In this case  $\sigma(G)=\sigma(H_1).$
\item  $\cal M=\{H_1\times X\mid X\in \cal X\}$ where $\cal X$ is a minimal cover of $H_2.$ In this case  $\sigma(G)=\sigma(H_2).$
\item  There exist $N_1\trianglelefteq H_1,$ $N_2\trianglelefteq H_2$ with $H_1/N_1\cong H_2/N_2\cong C_p$
and $\cal M$ consists of the maximal subgroups of $H_1\times H_2$ containing $N_1\times N_2.$ In this case $\sigma(G)=p+1.$
\end{enumerate}
\end{thm}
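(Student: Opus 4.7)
The plan is to begin by replacing every element of $\mathcal{M}$ by a maximal overgroup; this keeps the cardinality at $\sg(G)$ and, by minimality of $\mathcal{M}$, one may assume every member of $\mathcal{M}$ is a maximal subgroup of $G$. The standard Goursat description of the maximal subgroups of a direct product then furnishes three possible shapes for each $M\in\mathcal{M}$: $M=K\times H_2$ with $K$ maximal in $H_1$; $M=H_1\times K$ with $K$ maximal in $H_2$; or $M$ is a \emph{diagonal}, namely the preimage in $G$ of the graph of an isomorphism $H_1/N_1\to H_2/N_2$ with $H_1/N_1\cong H_2/N_2\cong S$ simple. The proof then splits according to whether $\mathcal{M}$ contains a diagonal.

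First suppose no diagonal appears in $\mathcal{M}$, and write $\mathcal{A}=\{K:K\times H_2\in\mathcal{M}\}$ and $\mathcal{B}=\{K:H_1\times K\in\mathcal{M}\}$ as collections of subgroups of $H_1$ and $H_2$. Since $(h_1,h_2)$ is covered by $\mathcal{M}$ precisely when $h_1\in\bigcup\mathcal{A}$ or $h_2\in\bigcup\mathcal{B}$, choosing $h_1,h_2$ outside these unions whenever possible forces $\bigcup\mathcal{A}=H_1$ or $\bigcup\mathcal{B}=H_2$. Minimality of $\mathcal{M}$ then forces the other family to be empty (otherwise one of its members could be dropped), placing us in case (1) or (2) with $\mathcal{A}$ (resp.\ $\mathcal{B}$) a minimal cover of $H_1$ (resp.\ $H_2$) of size $\sg(G)$.

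Now assume $\mathcal{M}$ contains a diagonal $D$ with associated data $(N_1,N_2,S)$; set $T=N_1\times N_2$ and pass to $\ol G=G/T\cong S\times S$. The images $\ol M=MT/T$ form a covering family $\overline{\mathcal{M}}$ of $\ol G$, with some $\ol M$ possibly equal to $\ol G$. The objective is to show $S\cong C_p$ for some prime $p$ and that $\mathcal{M}$ is precisely the set of the $p+1$ maximal subgroups of $G$ containing $T$. For $S=C_p$ one has $\ol G\cong C_p\times C_p$, whose $p+1$ index-$p$ maximal subgroups form the unique minimal cover of $\ol G$, of size $p+1$. A case analysis on $\ol M$---which can only be trivial, one of these $p+1$ maximals, or all of $\ol G$---combined with the minimality of $\mathcal{M}$ rules out any $M\not\supseteq T$, after which $\mathcal{M}$ is in bijection with a minimal cover of $\ol G$ and conclusion (3) follows.

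The main obstacle is ruling out non-abelian simple $S$. Here the strategy is to show that any minimal cover containing a diagonal with non-abelian simple composition factor $S$ can be modified, without increasing its cardinality, into one with strictly fewer diagonals, so that iteration reduces to the diagonal-free case already handled. I expect this step to rely on a careful comparison between $\sg(G)$, the covering numbers of $S\times S$, and the covers of $H_1,H_2$ obtained by pulling back minimal covers of their simple quotients, exploiting the partial results of \cite{cubo}. If the comparison succeeds, diagonal contributions can always be traded for product-type ones without loss, forcing $\mathcal{M}$ back into type (1) or (2) and leaving (3) as the only genuinely diagonal possibility.
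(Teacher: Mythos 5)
Your high-level architecture (pass to maximal overgroups, invoke the Goursat/Suzuki description of maximal subgroups of $H_1\times H_2$, and split on whether a diagonal occurs) matches the paper's, and your diagonal-free case is correct and is essentially the paper's $\Omega_1,\Omega_2$ argument. But the two hard steps are precisely where your proposal stops short of a proof. First, for a diagonal over $S\cong C_p$: the claim that ``a case analysis on $\ol M$ combined with the minimality of $\mathcal M$ rules out any $M\not\supseteq T$'' is not an argument, and this is where almost all the work of the paper lies. A maximal $M$ with $MT=G$ satisfies $\ol M=\ol G$, meets every coset of $T$, and can genuinely contribute to the cover; minimality alone does not exclude it. The paper needs Tomkinson's counting lemma (Lemma~\ref{lemmatom}) to bound the number of such subgroups, and then a full induction on $|G|$ (Lemma~\ref{diagon}) which reduces to $\frat G=1$, splits on whether $p$ divides $|Z(G)|$, and in the hard subcase analyses $\soc(G)$ via Bender's $F^*$-theorem together with the Aschbacher--Guralnick bound $|N:M\cap N|>2p$ for proper subgroups of nonabelian minimal normal subgroups. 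None of this is recoverable from your sketch.

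Second, the nonabelian simple $S$ case is explicitly left as a hope (``I expect\dots if the comparison succeeds''), and the trade-one-diagonal-at-a-time strategy you propose is not what works; there is no obvious way to replace a diagonal by product-type subgroups without increasing the count. The paper's actual device is different and essential: fix $x$ in the set $\Omega_1$ of elements of $H_1$ missed by the subgroups of type $L\times H_2$; any diagonal $M(a,b,\phi)$ covering a point $(x,h_2)$ forces $\pi_{2,b}(h_2)$ into the cyclic, hence proper (as $T_b$ is nonabelian simple), subgroup $\langle\phi(s_a)\rangle$. The pullbacks $U(a,b,\phi)\le H_2$ together with $\mathcal M_2$ then cover $H_2$, giving $\sigma(G)\le\sigma(H_2)\le|\mathcal M_2|+|\mathcal M_3|$ and hence $\mathcal M_1=\emptyset$ (symmetrically $\mathcal M_2=\emptyset$), after which Cohn's bound $\sigma(G)\ge |G:M|+1$ eliminates $\mathcal M_3$. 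Finally, a smaller but real gap: the theorem describes the original cover $\mathcal M$, not its family of maximal overgroups, so after your reduction you must descend to the original subgroups $X_i$ (the paper does this via Lemma~\ref{effe}) to conclude that they themselves contain $1\times H_2$, respectively $N_1\times N_2$.
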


\section{Proofs of the theorems}
First we recall some elementary results on the minimal covers.

\begin{lemma}\label{effe} If $\cal X$ is a minimal cover of a finite group $Y$ and $F$ is a normal subgroup
of $Y$ such that $FX\neq Y$ for  each $X\in \cal X,$ then $F \leq X$ for each $X\in \cal X.$
\end{lemma}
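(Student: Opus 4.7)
The plan is to pass to the enlarged family $\mathcal{X}' = \{FX : X \in \mathcal{X}\}$ and play its minimality off against that of $\mathcal{X}$. Since $F \trianglelefteq Y$, each $FX$ is a subgroup; it is proper by hypothesis; and since $X \subseteq FX$, the family $\mathcal{X}'$ still covers $Y$. As $|\mathcal{X}'| \leq |\mathcal{X}| = \sigma(Y)$, minimality forces $|\mathcal{X}'| = |\mathcal{X}|$, so the assignment $X \mapsto FX$ is injective and $\mathcal{X}'$ is itself a minimal cover of $Y$.

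Next I would use the minimality of $\mathcal{X}'$ to pick, for each $X \in \mathcal{X}$, a ``witness'' $y \in FX$ that lies in no other member of $\mathcal{X}'$. Since $\mathcal{X}$ covers $Y$, such $y$ lies in some $X^{\ast} \in \mathcal{X}$, hence in $FX^{\ast}$; the defining property of $y$ then forces $FX^{\ast} = FX$, and injectivity of the map $X \mapsto FX$ gives $X^{\ast} = X$. So in fact $y \in X$, not merely in $FX$.

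Then comes the payoff: for an arbitrary $f \in F$, the element $fy$ lies in some $X'' \in \mathcal{X}$, so $fy \in FX''$; but then $y = f^{-1}(fy) \in FX''$ as well (using $f^{-1} \in F$), and the uniqueness property of $y$ forces $FX'' = FX$, i.e.\ $X'' = X$. Hence $fy \in X$ for every $f \in F$, and combined with $y \in X$ this yields $F \leq X$, as required.

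The only delicate point I anticipate is making the witness $y$ actually land in $X$ rather than merely in the enlarged subgroup $FX$; this is the whole purpose of the injectivity step established in the first paragraph, and beyond that the argument is just a short manipulation of cosets.
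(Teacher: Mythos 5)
Your proof is correct and follows essentially the same route as the paper's: both pass to the enlarged cover $\{FX : X \in \mathcal{X}\}$, use its minimality to extract for each member a witness lying in no other member, and then exploit the fact that $\mathcal{X}$ itself covers $Y$ to force $F$-translates of the witness back into the original $X$. The only cosmetic difference is that you argue directly (observing that the witness can be taken to lie in $X$ itself), whereas the paper argues by contradiction after first translating its witness into $X_i$ by an element of $F$.
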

\begin{proof} Let $\cal X=\{X_1,\dots,X_\sigma\}$.
By our assumption,
$\{X_1F,\dots,X_\sigma F\}$ is also a minimal cover of $Y.$ In particular, for each $i$, there
exists $x_i\in X_iF$ such that $x_i\notin X_jF$ if $j\neq i.$ Assume by contradiction $F\not\leq X_i$ and take $f\in F\setminus X_i:$ we have
$x_i\cdot g\in X_i$ for some $g \in F$ and consequently $x_i\cdot g\cdot f\notin X_i$: this implies $x_i\cdot g\cdot f \in X_j$ for some $j\neq i,$
but then $x_i \in X_jF,$ a contradiction.
\end{proof}

\begin{lemma}\cite[Lemma 3.2]{tom}\label{lemmatom} Let $N$ be a proper normal subgroup of the finite group $G$. Let $U_1,\dots,U_h$ be proper subgroups of $G$ containing $N$ and $V_1\dots,V_k$ be proper subgroups such that $V_iN=G$ with $|G:V_i|=\beta_i$ and $\beta_1\leq \dots\leq  \beta_k.$ If $$G=U_1\cup \dots \cup U_h \cup V_1\cup \dots \cup V_k\text{\quad but \quad }G\neq U_1\cup \dots \cup U_h,$$ then $\beta_1\leq k.$ Furthermore, if $\beta_1=k,$ then $\beta_1=\dots=\beta_k$ and $V_i\cap V_j \leq
U_1\cup \dots \cup U_h$ for all $i\neq j.$
\end{lemma}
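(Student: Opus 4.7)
The plan is to exploit the fact that the subgroups $U_1,\dots,U_h$ all contain $N$, and so their union is a union of cosets of $N$. Since $G\ne U_1\cup\dots\cup U_h$, there is at least one coset $gN$ entirely disjoint from $U_1\cup\dots\cup U_h$; such a coset must therefore be covered by the $V_j$'s. I will analyze a single such coset, then at the end vary it.

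First I would compute $|V_j\cap gN|$. From $V_jN=G$ we get $|V_j\cap N|=|V_j||N|/|G|=|N|/\beta_j$, and a standard coset-translation argument shows that whenever $V_j\cap gN$ is nonempty it is a translate of $V_j\cap N$, hence has size $|N|/\beta_j$. Because the coset $gN$ is covered by those $V_j$ that meet it, a union bound gives
\[
|N|\;=\;|gN|\;\le\;\sum_{j:\,V_j\cap gN\ne\varnothing}\frac{|N|}{\beta_j}\;\le\;\sum_{j=1}^{k}\frac{|N|}{\beta_j}\;\le\;k\cdot\frac{|N|}{\beta_1},
\]
which yields $\beta_1\le k$ after cancelling $|N|$.

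For the equality statement, suppose $\beta_1=k$. Then each of the three inequalities above must be equalities. Equality in the third forces $\beta_1=\beta_2=\dots=\beta_k$; equality in the second forces every $V_j$ to meet the coset $gN$; and equality in the first forces the sets $V_j\cap gN$ (which now all have the common size $|N|/\beta_1$) to form a partition of $gN$. In particular, for $i\ne j$, $V_i\cap V_j$ has empty intersection with $gN$.

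The conclusion $V_i\cap V_j\subseteq U_1\cup\dots\cup U_h$ then comes from repeating this argument for every coset $gN$ disjoint from $U_1\cup\dots\cup U_h$: any element of $V_i\cap V_j$ lies in some coset $xN$, and if $xN\not\subseteq U_1\cup\dots\cup U_h$ the partition conclusion applied to $xN$ would force $V_i\cap V_j\cap xN=\varnothing$, contradicting $x\in V_i\cap V_j\cap xN$. Hence every element of $V_i\cap V_j$ lies in a coset contained in $U_1\cup\dots\cup U_h$. The main subtlety I expect is the bookkeeping around the coset structure: one must remember that since $N\le U_l$ the $U_l$'s are unions of $N$-cosets (so we really can pick an entire coset avoiding them), and one must apply the local argument to every such coset rather than just one, to obtain the global containment in the last clause.
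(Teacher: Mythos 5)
Your argument is correct: the coset-counting computation (each $U_l$ is a union of $N$-cosets, an uncovered coset $gN$ must be covered by the $V_j$'s, each nonempty $V_j\cap gN$ is a translate of $V_j\cap N$ of size $|N|/\beta_j$, and the equality analysis plus varying the coset gives the last clause). The paper itself gives no proof, citing Tomkinson's Lemma 3.2, and your proof is essentially the standard argument from that source.
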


The other tool that we need in the proof is a description of the maximal subgroups of a direct product $H_1\times H_2.$
\begin{itemize}
\item
We will say that a maximal subgroup $M$ of $H_1\times H_2$ is of {\sl{standard type}} if either $M=X_1 \times H_2$ with
$X_1$ a maximal subgroup of $H_1$ or $M=H_1 \times X_2$ with
$X_2$ a maximal subgroup of $H_2$.
\item We will say that a maximal subgroup $M$ of $H_1\times H_2$ is of {\sl{diagonal type}} if there exist
a maximal normal subgroup $N_1$ of $H_1$, a maximal normal subgroup $N_2$ of $H_2$ and an isomorphism
$\phi: H_1/N_1 \to H_2/N_2$ such that $M=\{(h_1,h_2)\in H_1\times H_2 \mid \phi(h_1N_1)=h_2N_2\}.$
\end{itemize}
By \cite[Chap. 2, (4.19)]{suzuki}, the following holds.
\begin{lemma}\label{suz} A maximal subgroup of $H_1\times H_2$ is either of standard type or of diagonal type.
\end{lemma}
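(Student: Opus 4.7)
The plan is to let $M$ be a maximal subgroup of $H_1\times H_2$, write $\pi_i$ for the projection onto the $i$-th factor, and split the analysis according to whether $M$ contains one of the coordinate factors $H_1\times 1$ or $1\times H_2$.

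First I would dispose of the easy case. If $H_1\times 1\le M$ then $M=H_1\times \pi_2(M)$, and maximality of $M$ forces $\pi_2(M)$ to be a maximal subgroup of $H_2$; hence $M$ is of standard type. The case $1\times H_2\le M$ is symmetric.

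Otherwise maximality yields $M\cdot(H_1\times 1)=M\cdot(1\times H_2)=H_1\times H_2$, and from this it is immediate that $\pi_1(M)=H_1$ and $\pi_2(M)=H_2$. Setting
\[
K_1=\{h_1\in H_1\mid (h_1,1)\in M\},\qquad K_2=\{h_2\in H_2\mid (1,h_2)\in M\},
\]
a routine conjugation argument using the surjectivity of $\pi_1,\pi_2$ shows $K_i\trianglelefteq H_i$, and (one direction of) Goursat's lemma identifies $M$ with the preimage of the graph of an isomorphism $\phi\colon H_1/K_1\to H_2/K_2$:
\[
M=\{(h_1,h_2)\in H_1\times H_2\mid \phi(h_1K_1)=h_2K_2\}.
\]

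Finally I would show that $K_1$ is a maximal normal subgroup of $H_1$; the analogous statement for $K_2$ then follows via $\phi$, and $M$ is of diagonal type as required. Given any $K_1\le L_1\trianglelefteq H_1$, set $L_2$ to be the preimage in $H_2$ of $\phi(L_1/K_1)\le H_2/K_2$, and let $\bar\phi\colon H_1/L_1\to H_2/L_2$ be the induced isomorphism. Define $T$ to be the preimage in $H_1\times H_2$ of the graph of $\bar\phi$ inside $H_1/L_1\times H_2/L_2$. Then $M\le T\le H_1\times H_2$ with $[T:M]=|L_1/K_1|$, so maximality of $M$ gives $T=M$ or $T=H_1\times H_2$, forcing $L_1=K_1$ or $L_1=H_1$. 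The only mildly delicate point is the Goursat step in the third paragraph; everything else is bookkeeping around the maximality of $M$.
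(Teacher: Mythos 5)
Your argument is correct and complete. Note, however, that the paper does not actually prove this lemma: it is quoted directly from Suzuki \cite[Chap.~2, (4.19)]{suzuki}, so there is no in-paper proof to compare against. What you have written is the standard self-contained derivation: the trichotomy according to whether $M$ contains $H_1\times 1$, contains $1\times H_2$, or neither; in the last case the normality of $H_1\times 1$ and $1\times H_2$ in $G$ makes $M(H_1\times 1)=M(1\times H_2)=G$ by maximality, whence both projections are onto and Goursat's lemma applies; and your final step correctly converts maximality of $M$ into maximality of $K_1$ (and hence of $K_2$) as a normal subgroup, via the intermediate subgroup $T$ built from the induced isomorphism $\bar\phi$. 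All the individual verifications (that $M=H_1\times\pi_2(M)$ in the first case, that $K_i\trianglelefteq H_i$, that $L_2\trianglelefteq H_2$ so that $\bar\phi$ is defined, and that $[T:M]=|L_1/K_1|$) go through as you indicate. The only thing your write-up buys beyond the paper is self-containedness; the mechanism is presumably the same one underlying the cited result in Suzuki, so there is no genuinely divergent idea here, just an honest proof where the authors chose a reference.
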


\begin{lemma}\label{diagon} Let $\cal M=\{M_1,\dots,M_\sigma\}$ be a minimal cover of $G=H_1\times H_2.$
If all the subgroups in $\cal M$ are maximal and $\cal M$ contains a subgroup of diagonal type whose index is a prime number $p$, then
$\sigma(G)=p+1$ and all the subgroups in M are normal of index p.
\end{lemma}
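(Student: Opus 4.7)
The plan is to build an explicit cover of $G$ of size $p+1$ from the quotient $G/(N_1\times N_2)\cong C_p\times C_p$ associated to the diagonal $M$, and then to exploit the minimality of $\mathcal{M}$ through Lemma \ref{lemmatom} to force every element of $\mathcal{M}$ to have index $p$ and to be normal.

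I would set $N:=N_1\times N_2$, where $N_1\trianglelefteq H_1$, $N_2\trianglelefteq H_2$ and $\phi:H_1/N_1\to H_2/N_2$ are the data associated to $M$. Since $[G:M]=p$, both quotients $H_i/N_i$ equal $C_p$ and $G/N\cong C_p\times C_p$. Let $\mathcal{U}^*$ denote the set of maximal subgroups of $G$ containing $N$: these correspond bijectively to the $p+1$ maximal subgroups of $G/N$, and since $G/N$ is abelian each is normal in $G$ of index $p$; as $M$ contains $N$ we have $M\in\mathcal{U}^*$. The $p+1$ maximal subgroups of $C_p\times C_p$ cover this group, so $\mathcal{U}^*$ is a cover of $G$, yielding $\sigma(G)\le p+1$.

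Next I would partition $\mathcal{M}=\mathcal{U}\cup\mathcal{V}$ with $\mathcal{U}=\mathcal{M}\cap\mathcal{U}^*$, $\mathcal{V}=\mathcal{M}\setminus\mathcal{U}^*$, $h:=|\mathcal{U}|\ge1$, $k:=|\mathcal{V}|$. If $\mathcal{V}=\emptyset$, the images of the $U_i$'s in $G/N$ form a cover of $C_p\times C_p$ by maximal subgroups, forcing $h\ge p+1$ and thus $\mathcal{M}=\mathcal{U}^*$; the conclusion holds. Otherwise, every $V\in\mathcal{V}$ satisfies $VN=G$ by maximality, and by minimality of $\mathcal{M}$ the $U_i$'s alone do not cover $G$, so Lemma \ref{lemmatom} with $F=N$ applies, giving $\beta_1\le k$ (where $\beta_1\le\cdots\le\beta_k$ are the indices $[G:V_j]$) together with the statement that when $\beta_1=k$ all $\beta_j$ coincide and $V_i\cap V_j\subseteq\bigcup_\ell U_\ell$ for $i\ne j$.

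The main obstacle will be to combine these constraints with $\sigma(G)\le p+1$, i.e.\ $h+k\le p+1$, to conclude $h+k=p+1$, $\beta_j=p$ for every $j$, and $V_j\trianglelefteq G$. The key additional input is that each of the $(p-1)(p+1-h)$ cosets $gN$ not contained in any $U_i$ must be covered by $\mathcal{V}$; since $V_j\cap gN$ is a single coset of $V_j\cap N$ in $gN$ of size $|N|/\beta_j$, a union bound on $|gN|=|N|$ yields $\sum_{j=1}^k 1/\beta_j\ge1$. Combined with the classification from Lemma \ref{suz} --- each diagonal maximal subgroup of a direct product is automatically normal (its quotient being cyclic), and a standard $X_1\times H_2$ is normal iff $X_1\trianglelefteq H_1$ --- this numerical data should rule out $\beta_j<p$ and non-normal $V_j$, pinning down the structure of $\mathcal{M}$ as claimed.
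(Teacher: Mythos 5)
Your opening steps are sound and agree with the paper: from the diagonal subgroup $M$ of index $p$ you correctly get $G/N\cong C_p\times C_p$ with $N=N_1\times N_2$, hence $\sigma(G)\le p+1$, and the case where every member of $\mathcal{M}$ contains $N$ is disposed of correctly. The gap is in the remaining case $\mathcal{V}\neq\emptyset$, which is where all the difficulty lives. The constraints you extract --- $h+k\le p+1$, $\beta_1\le k$ from Lemma \ref{lemmatom}, and the union bound $\sum_j 1/\beta_j\ge 1$ on an uncovered coset of $N$ --- do not come close to forcing $\beta_j=p$ for all $j$. A subgroup $V\in\mathcal{V}$ satisfies $VN=G$, but its index is governed by the structure of $H_1$ and $H_2$ \emph{below} $N_1\times N_2$, not by $G/N$: it can be a standard maximal subgroup $X\times H_2$ with $[H_1:X]$ as small as $2$, or a diagonal subgroup over a nonabelian simple quotient. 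Your numerics happily admit, say, $h=1$ and a handful of $V_j$ of small index with $\sum 1/\beta_j\ge 1$ and $h+k<p+1$; nothing you have written excludes this, and excluding it is precisely the content of the lemma. Also, your parenthetical claim that ``each diagonal maximal subgroup of a direct product is automatically normal'' is false: $\{(s,\phi(s))\}$ is not normal in $S\times S$ for $S$ nonabelian simple; only the prime-index diagonal subgroups (abelian quotient) are normal.

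The paper closes this gap with genuinely structural, not numerical, arguments: it inducts on $|G|$ to reduce to $\frat G=1$, splits on whether $p$ divides $|Z(G)|$, and in the hard case analyzes $\soc G$ via the Bender $F^*$-theorem to produce a minimal normal subgroup $Q$ (contained in $H_1$ or $H_2$) such that $G/C_G(Q)$ has a chief factor of order $p$; it then derives contradictions from lower bounds on $\beta=\min_{M\in\mathcal{U}}|G:M|$ over the subgroups not containing $Q$ --- namely $p<|Q|$ when $Q$ is abelian (since $G/C_G(Q)\le\GL(Q)$), and the Aschbacher--Guralnick bound that proper subgroups of a nonabelian simple group have index greater than $2p$ when $p\le|\out(Q)|$. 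Some input of this kind, tying the possible indices of the subgroups in $\mathcal{V}$ back to the prime $p$, is indispensable; your proposal would need to be supplemented by essentially the whole of that analysis.
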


\begin{proof}
First notice that if $\cal M$ contains a maximal subgroup of diagonal type and index $p$, then $C_p\times C_p$ is an epimorphic image
of $G$ and consequently $$\sigma(G)\leq \sigma(C_p\times C_p)=p+1.$$

We argue by induction on the order of $G$. We may assume that there exists no nontrivial normal subgroup
$N$ of $G$ such that $N\leq M$ for all $M\in \cal M$ and $N\leq H_1$. Otherwise $\{M_1/N,\dots,M_\sigma/N\}$
would be a minimal cover of $(H_1/N)\times H_2$ containing a maximal diagonal subgroup of index $p$ and the
conclusion follows by induction. For the same reason, there is no nontrivial normal subgroup
$N$ of $G$ such that $N\leq M$ for all $M\in \cal M$ and $N\leq H_2$. In particular
$$\frat G =\frat H_1 \times \frat H_2=1.$$

First assume that $Z(G)$ has order divisible by $p.$ This implies that there exists a central subgroup, say $N,$ of order $p$, which is contained either in $H_1$ or in $H_2$. Let $\cal U$ be the set of subgroups in $\cal M$ not containing $N$. By our assumption $\cal U\neq \emptyset$,
moreover if $M\in \cal U$, then $G=M\times N$ and in particular $M$ is a normal subgroup of $G$ and has index $p$. By Lemma \ref{lemmatom}, $p\leq |\cal U|\leq \sigma(G)\leq p+1.$ Moreover $N$ is not contained in the union of the subgroups in $\cal U$, so we must  have $|\cal U|=p$ and $\sigma(G)=p+1.$ Let $M$ be
unique element of  $\cal M\setminus \cal U.$ By Lemma \ref{lemmatom}, $M$ contains the intersection $M_i\cap M_j$ of any two different subgroups in $\cal U$,
but $G/(M_i\cap M_j)\cong C_p \times C_p,$ so $M$ is a normal subgroup of index $p.$

Now assume that $p$ does not divide $|Z(G)|.$ Write $\soc (G)=N_1\times \dots \times N_t$ as a product of minimal normal subgroups.
We may assume that each $N_i$ is contained either in $H_1$ or in $H_2$ and  that $N_i$ is abelian if and only if $i<u.$ Let $C=\bigcap_{1\leq i\leq t}C_G(N_i).$
Since $\frat G=1,$ the socle of $G$ coincides with the generalized Fitting subgroup of $G$ and, by the Bender $F^*$-Theorem (see for example \cite[(31.13)]{asch}),
$$C=C_G(\soc G )=Z(\soc G )=\prod_{i<u}N_i.$$
Since $p$ does not divide $|Z(G)|$, if $N_i$ is a $p$-group, then $N_i=[N_i,G]\leq G^\prime \cap C.$
In particular $p$ does not divide $|C: G^\prime \cap C|=|CG^\prime : G^\prime|$. On the other hand $p$ divides
$|G:G^\prime|=|G: CG^\prime||CG^\prime : G^\prime|$, hence $G/C$ has $C_p$ as an epimorphic image. Since $G/C$
is a subdirect product of $\prod_{1\leq i \leq t} G/C_G(N_i),$  there must exist a minimal normal subgroup $N$ of $G$ which is contained in either $H_1$ or $H_2$ and with the property that $A=G/C_G(N)$ has a chief factor of order $p.$ By our assumption the set $\cal U$ of the subgroups in $\cal M$ not containing $N$ is non empty. By Lemma \ref{lemmatom}, $p+1\geq \sigma(G)\geq |\cal U|
\geq \beta$, with $\beta=\min_{M\in \cal U}|G:M|.$ Fix a maximal subgroup $M$ in $\cal U$ with $|G:M|=\beta.$

If $N$ is abelian, then the subgroups in $\cal U$ are complements of $N$,
hence $\beta=|N|.$ Moreover $N$ is not contained in the union of the subgroups in $\cal U,$
hence $p+1\geq \sigma(G)\geq |N|+1$. However $p$ must be a prime divisor of $|A|$, but $A\leq \GL(N)$
and this implies $p < |N|$, a contradiction.

If $N$ is a non-abelian simple group, then $C_p$ is isomorphic to a chief factor of a subgroup of $\out(N)$ hence $p \leq |\out(N)|$.
However $\beta=|G:M|=|N:M\cap N|$ is the index of a proper subgroup of $N$ so in particular $\beta>2p$
(see e.g.  \cite[Lemma 2.7]{asch-gur}).
But then $p+1\geq \beta >2p,$ a contradiction.

We are left with the case $N=S_1\times \dots \times S_r \cong S^r$ where $S$ is a  nonabelian simple group. Let $\pi_i: N\to S_i$ the projection to the $i$-th factor of $N.$ Since $MN=G$ and $N$ is a minimal normal subgroup of $G$, the maximal subgroup $M$ permutes transitively the minimal normal subgroups $S_1,\dots,S_r$ of $N$ and normalizes $M\cap N$. This implies that $\pi_1(M\cap N)\cong \dots \cong \pi_r(M\cap N)$
so either $M\cap N \leq T_1 \times \dots \times T_r$ with $T_i < S_i$ for each $i \in \{1,\dots,r\}$ or (see for example \cite[Proposition 1.1.44]{classes}) $M\cap N\cong S^u$ with $u$ a proper divisor of $r.$
Therefore, by \cite[Lemma 2.7]{asch-gur}, $$p+1 \geq \beta=|N:M\cap N|\geq \min \{ 2^rq^r,|S|^{r/2}\}$$
with $q$ the largest prime divisor of $|\out(S)|.$
Moreover $C_p$ is isomorphic to a chief factor of a subgroup of $\out(N)\cong \out(S) \wr \sym (r)$, so either $p$ divides $|\sym (r)|$,
in which case $p\leq r,$ or $p$ divides $|\out S|$ and consequently $p\leq q.$ Both these cases lead to a contradiction.
\end{proof}

\begin{prop} Let $\cal X=\{X_1,\dots,X_\sigma\}$ be a minimal cover of $G=H_1\times H_2.$
If a subgroup of $\cal X$ is contained in a maximal subgroup of diagonal type whose index is a prime number $p$, then
$\sigma(G)=p+1$, $X_i$ is a normal subgroup of index $p$ for each $i \in \{1,\dots,\sigma\}$ and
$\bigcap_i X_i$ has index $p^2$ in $G.$
\end{prop}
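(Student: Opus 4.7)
The plan is to reduce the statement to Lemma \ref{diagon} by enlarging each $X_i$ to a maximal subgroup of $G$. Without loss of generality assume $X_1$ is contained in the diagonal maximal subgroup $M$ of index $p$, and set $M_1 = M$. For each $i \in \{2,\dots,\sigma\}$ choose any maximal subgroup $M_i$ containing $X_i$. The family $\{M_1,\dots,M_\sigma\}$ then covers $G$, and since $\sigma(G) = |\mathcal X| = \sigma$ the $M_i$ are forced to be pairwise distinct, so this family is itself a minimal cover of $G$ by maximal subgroups, one of which is the diagonal maximal $M$ of prime index $p$. Lemma \ref{diagon} applies and yields $\sigma(G) = p+1$ together with the fact that each $M_i$ is normal of index $p$ in $G$.

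Set $K = \bigcap_{i=1}^{p+1} M_i$. This is normal in $G$, and $KX_i \le M_i \ne G$ for every $i$, so Lemma \ref{effe} gives $K \le X_i$ for all $i$. The key numerical step is to show $[G:K]=p^2$, and for this I would mimic the argument in the proof of Lemma \ref{diagon}: fixing any $l$ and applying Lemma \ref{lemmatom} to the cover $\{M_1,\dots,M_{p+1}\}$ with $N=M_l$, the only member of the cover containing $N$ is $M_l$ itself, while the remaining $p$ subgroups $M_i$, $i\ne l$, satisfy $M_iN = G$ and $|G:M_i|=p$. The ``furthermore'' clause then forces $M_i\cap M_j \le M_l$ whenever $l\notin\{i,j\}$; letting $l$ vary, one obtains $M_i\cap M_j=K$ for all distinct $i,j$. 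Consequently $|M_i:K| = |M_iM_j:M_j| = |G:M_j| = p$, so $|G:K| = p\cdot p = p^2$.

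To conclude, for each $i$ we have $K \le X_i \le M_i$ with $|M_i/K|=p$, hence $X_i/K$ is either trivial or the whole of $M_i/K$. The first possibility forces $X_i = K \le X_j$ for every $j$, making $X_i$ redundant and contradicting the minimality of $\mathcal X$; therefore $X_i=M_i$ for every $i$. In particular each $X_i$ is normal of index $p$ in $G$, and $\bigcap_i X_i = K$ has index $p^2$ in $G$, as required. The main obstacle is the intersection computation $M_i\cap M_j=K$ via Lemma \ref{lemmatom}: without it, $G/K$ would only be known to be a nontrivial elementary abelian $p$-group of a priori larger order, and one could not conclude that $M_i/K$ is simple and thereby identify $X_i$ with $M_i$.
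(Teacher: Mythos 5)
Your proof is correct and follows essentially the same route as the paper: enlarge each $X_i$ to a maximal subgroup, invoke Lemma \ref{diagon} to get $\sigma=p+1$ and normality of the $M_i$, use Lemma \ref{lemmatom} to show $\bigcap_i M_i$ has index $p^2$, and descend to the $X_i$ via Lemma \ref{effe} and minimality. The only cosmetic difference is that the paper applies Lemma \ref{lemmatom} once with $N=M_1\cap M_2$ to show every $M_i$ contains it (and then counts subgroups of $G/N\cong C_p\times C_p$), whereas you invoke the ``furthermore'' clause with $N=M_l$ for each $l$; both give the same intersection computation.
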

\begin{proof} For each $i\in \{1,\dots,\sigma\},$ let $M_i$ be a maximal subgroup of $G$ containing $X_i$, chosen is such a way that
$M_i$ is a maximal subgroup of diagonal type and index $p$ when $X_i$ is contained in such a maximal subgroup.
The cover $\cal M=\{M_1,\dots,M_\sigma\}$ satisfies the hypothesis of Lemma \ref{diagon}, so $\sigma=p+1$ and $M_i$ is a maximal normal subgroup of index $p$ for each
$i\in \{1,\dots,\sigma\}.$ Let $N=M_1\cap M_2$. If, by contradiction, there exists $i \in \{3,\dots,\sigma\}$ such that $M_i$ does not contain $N,$ then, by Lemma \ref{lemmatom}, $\sigma \geq 2+p.$ So for each $i\in \{1,\dots,\sigma\}$, we have $N\leq M_i$ but then $X_iN\leq M_i\neq G$, hence $N \leq X_i$ by Lemma \ref{effe}.
In particular $\{X_1/N,\dots,X_\sigma/N\}$ is a cover of $G/N\cong C_p\times C_p.$ Since $\sigma(C_p\times C_p)=p+1=\sigma$, $X_i\neq N$ for each $i \in \{1,\dots,\sigma\}.$
\end{proof}

\begin{prop} Let $\cal X=\{X_1,\dots,X_\sigma\}$ be a minimal cover of $G=H_1\times H_2.$
If $\cal X$ contains no subgroup of diagonal type whose index is a prime number, then either $H_1\times 1$ or $1\times H_2$
is contained in $\bigcap_{1\leq i\leq \sigma} X_i.$
\end{prop}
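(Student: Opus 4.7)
The plan is to argue by contradiction: assume that neither $A := H_1 \times 1$ nor $B := 1 \times H_2$ lies in $\bigcap_i X_i$. By Lemma \ref{effe} applied to the normal subgroups $A$ and $B$, this forces both sets $\cal U := \{X_i \in \cal X : \pi_2(X_i) = H_2\}$ and $\cal V := \{X_j \in \cal X : \pi_1(X_j) = H_1\}$ to be nonempty.

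For each $X_i$ I would select a maximal overgroup $M_i$ preferring standard type: if $\pi_2(X_i) < H_2$, pick $M_i$ of the form $H_1 \times Y$; else if $\pi_1(X_i) < H_1$, pick $M_i$ of the form $X' \times H_2$; otherwise $X_i$ is subdirect in $G$ and, by Lemma \ref{suz}, every maximal overgroup is of diagonal type, and by the hypothesis every such has non-prime index. The family $\cal M := \{M_1,\ldots,M_\sigma\}$ covers $G$ by maximal subgroups, and minimality of $\cal X$ forces the $M_i$'s to be pairwise distinct: two coinciding $M_i$'s would let us replace the corresponding $X_i$'s by their common maximal overgroup, producing a smaller cover. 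Hence $\cal M$ is itself a minimal cover of $G$ of size $\sigma$. Partition $\cal M = \cal M_1 \sqcup \cal M_2 \sqcup \cal M_d$ by type; a further application of Lemma \ref{effe} to $\cal X$ shows that $\cal M_2 \cup \cal M_d$ and $\cal M_1 \cup \cal M_d$ are both nonempty, for if, say, $\cal M = \cal M_1$ then $AX_i \leq AM_i = M_i \neq G$ for every $i$, forcing $A \leq \bigcap_i X_i$, contrary to our assumption.

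In the easy sub-case $\cal M_d = \emptyset$, both $\cal M_1$ and $\cal M_2$ are nonempty; writing $\bigcup \cal M_1 = H_1 \times S$ and $\bigcup \cal M_2 = T \times H_2$ for $S \subseteq H_2$ and $T \subseteq H_1$, the minimality of $\cal M$ forces $S \subsetneq H_2$ and $T \subsetneq H_1$ (else one subfamily is redundant); but then any $(h, k) \in (H_1 \setminus T) \times (H_2 \setminus S)$ lies outside $\bigcup \cal M$, a contradiction.

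The main obstacle is the sub-case $\cal M_d \neq \emptyset$. My plan is to apply Lemma \ref{lemmatom} to $\cal M$ with $N = A$, taking $\cal M_1$ as the $U$'s and $\cal M_2 \cup \cal M_d$ as the $V$'s (each $V \in \cal M_d$ satisfies $VA = G$ because diagonal maxes project onto $H_2$), and symmetrically with $N = B$; after checking via minimality of $\cal M$ that $\bigcup \cal M_1 \neq G$ and $\bigcup \cal M_2 \neq G$, the lemma yields bounds $\beta_1 \leq |\cal M_2| + |\cal M_d|$ and $\beta'_1 \leq |\cal M_1| + |\cal M_d|$ for the smallest $V$-indices on each side. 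Combining these with the fact that every $V \in \cal M_d$ has index equal to the order of a non-abelian simple group (hence at least $60$), together with the equality clause of Lemma \ref{lemmatom}, one should be able to push through to a contradiction. The delicate sub-sub-case is when $\cal M_1$ or $\cal M_2$ is empty, so $\beta_1$ or $\beta'_1$ is forced to come from a diagonal; I expect this to require structural input in the spirit of the chief-factor analysis used in the proof of Lemma \ref{diagon}.
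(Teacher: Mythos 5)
Your setup (choosing maximal overgroups $M_i\supseteq X_i$, partitioning the resulting cover $\mathcal{M}$ by type, disposing of the purely standard case, and descending from $\mathcal{M}$ back to $\mathcal{X}$ at the end via Lemma \ref{effe}) coincides with the paper's, and the easy sub-case $\mathcal{M}_d=\emptyset$ is handled correctly. But the case $\mathcal{M}_d\neq\emptyset$ is where the entire content of the proposition lies, and there your argument has a genuine gap. The bounds you propose to extract from Lemma \ref{lemmatom} are vacuous whenever the relevant $V$-family contains a standard-type subgroup: taking $N=H_1\times 1$, the subgroups $L\times H_2$ occur among the $V_i$ and may have index $2$ in $G$, so the conclusion $\beta_1\leq k$ carries no information and no contradiction is in sight. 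In the sub-sub-case you yourself flag as delicate (one standard family empty, so that $\beta_1$ is the order of a non-abelian simple group), you only say you \emph{expect} an argument in the spirit of Lemma \ref{diagon}; but the proof of that lemma is driven throughout by the presence of a diagonal maximal subgroup of \emph{prime} index, which is precisely what the hypothesis of this proposition excludes, so it does not transfer. As written, the proof does not close.

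The missing idea is the paper's transfer of the cover down to one factor. Fix $x\in H_1$ lying in none of the first coordinates of the members of $\mathcal{M}_1$. A diagonal $M(a,b,\phi)\in\mathcal{M}_d$ meets $\{x\}\times H_2$ exactly in $\{x\}\times \pi_{2,b}^{-1}\left(\phi(s_a)\right)$, where $s_a=\pi_{1,a}(x)$ and $\pi_{1,a},\pi_{2,b}$ are the projections onto the simple quotients involved; this set is contained in $\{x\}\times U(a,b,\phi)$ with $U(a,b,\phi)=\pi_{2,b}^{-1}\left(\langle\phi(s_a)\rangle\right)$ a \emph{proper} subgroup of $H_2$, precisely because the simple quotient in question is non-abelian, hence non-cyclic --- this is the only place the no-prime-index hypothesis is used. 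Consequently $H_2$ is covered by the $|\mathcal{M}_2|+|\mathcal{M}_d|$ proper subgroups $\{L \mid H_1\times L\in\mathcal{M}_2\}\cup\{U(a,b,\phi)\}$, and $\sigma(G)\leq\sigma(H_2)\leq|\mathcal{M}_2|+|\mathcal{M}_d|$ forces $\mathcal{M}_1=\emptyset$; symmetrically $\mathcal{M}_2=\emptyset$. The surviving all-diagonal case is then eliminated by Cohn's Lemma 1 combined with $\sigma(G)\leq\sigma(S)\leq|S|=|G:M|$ for the non-abelian simple quotient $S$. Without a substitute for this transfer construction, Lemma \ref{lemmatom} alone will not yield the proposition.
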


\begin{proof}
For each $i\in \{1,\dots,\sigma\},$ let $M_i$ be a maximal subgroup of $G$ containing $X_i$.
We have that  $\cal M=\{M_1,\dots,M_\sigma\}$ is a minimal cover of  $G$  given by $\sigma=\sg(G)$ maximal subgroups of $G$.
   We set:
   \begin{eqnarray*}
   \cal M_1&=&\{M \in \cm \mid M \geq \ H_2\}=\{L \times H_2 \mid L  \textrm{ a maximal subgroup of } \ H_1\},\\
   \cal M_2&=&\{M \in \cm \mid M \geq \ H_1\}=\{H_1 \times L \mid L  \textrm{ a maximal subgroup of } \ H_2\},\\
   \cal M_3&=& \cal M \setminus (\cal M_1 \cup \cal M_2 ).
  \end{eqnarray*}
Then we define the two sets
$$\Omega_1= H_1\setminus \left(\bigcup_{L \times H_2 \in \cal M_1}L\right), \quad
 \Omega_2= H_2\setminus \left(\bigcup_{H_1 \times L \in \cal M_2}L\right) $$
If $\Omega_1=\emptyset,$ then
$G=H_1\times H_2= \bigcup_{L \times H_2 \in \cal M_1}L\times H_2$, hence $\cal M=\cal M_1.$
In the same way, if $\Omega_2=\emptyset,$ then $\cal M=\cal M_2.$

So we may assume $\Omega_1\times \Omega_2\neq \emptyset.$
For $i\in \{1,2\},$ let $K_i$ be the intersection of the maximal normal subgroups  of $H_i.$ Notice that $H_i/K_i$ is isomorphic to a direct product of  simple groups and  $K_i$ is the smallest
subgroup of $H_i$ with this property. To fix our notation assume
$$H_1/K_1=\prod_{1\leq a \leq \alpha}S_a,\quad \quad H_2/K_2=\prod_{1\leq b \leq \beta}T_b$$
with $S_a$, $T_b$ simple groups.
To each $a\in A=\{1,\dots,\alpha\}$ there corresponds  the projection $\pi_{1,a}: H_1 \to S_a$ and to each $b \in B=\{1,\dots,\beta\}$
there corresponds  the projection $\pi_{2,b}: H_2 \to T_b.$
For $i\in\{1,2\},$ consider the projection $\rho_i: H_i\to H_i/K_i$ and the
image  $$\Delta_i=\{\rho_i(\omega)\mid\omega \in \Omega_i\}$$ of $\Omega_i$ under this projection.

By Lemma \ref{suz},
 to any $M\in \cal M_3$ we may associate a triple $(a,b,\phi)$ with $a\in A,$ $b\in B$ and $\phi: S_a\to T_b$
a group isomorphism such that
$$M=M(a,b,\phi)=\{(h_1,h_2)\in H_1\times H_2\mid \phi(\pi_{1,a}(h_1))=\pi_{2,b}(h_2)\}.$$
Now let $\Lambda$ be the set of the triples $(a,b,\phi)$  such that
$M(a,b,\phi)\in \cal M_3.$ By hypothesis, $\cal M_3$ contains no subgroup of index a prime number; this implies that
if $(a,b,\phi)\in \Lambda,$ then $S_a\cong T_b$ is a nonabelian simple group.

Now fix an element $(s_1,\dots,s_\alpha)\in \Delta_1$ and an element $x \in \Omega_1$ with $\rho_1(x)=(s_1,\dots,s_\alpha)$
and for each $(a,b,\phi)\in \Lambda$ let
$$U(a,b,\phi)=\{h\in H_2\mid \pi_{2,b}(h)\in \langle\phi(s_a)\rangle\}.$$
Clearly, since $T_b$ is a nonabelian simple group, $\langle\phi(s_a)\rangle\neq T_b$ and $U(a,b,\phi)$ is a proper subgroup of $H_2$.
Consider the following family of subgroups of $H_2:$
$$\cal T=\{M \mid H_1\times M \in \cal M_2\} \cup \{U(a,b,\phi)\mid (a,b,\phi) \in \Lambda\}.$$
We claim that $\cal T$ is a cover of $H_2.$ We have to prove that if $h_2 \in \Omega_2$,
then $h_2\in U(a,b,\phi)$ for some $(a,b,\phi)\in \Lambda.$
Observe that the elements of the set $\Om_1 \times \Om_2$ do not belong to any of the subgroups in  $\cal M_1$ or $\cal M_2$, thus the set   $\Om_1 \times \Om_2$ has to be covered by the subgroups in $\cal M_3$. In particular if $h_2 \in \Omega_2$, then $(x,h_2)\in M(a,b,\phi)$ for
some $(a,b,\phi)\in \Lambda.$ This implies that
$\pi_{2,b}(h_2)=\phi(\pi_{1,a}(x))=\phi(s_a)\in \langle \phi(s_a)\rangle$, hence $h_2\in U(a,b,\phi)$ and the claim is proved.
But this implies $|\cal M_1|+|\cal M_2|+|\cal M_3|=\sigma(G)\leq \sigma(H_2)\leq |\cal T|\leq |\cal M_2|+|\cal M_3|$
and, consequently, $\cal M_1=\emptyset.$ With a similar argument we deduce $\cal M_2=\emptyset.$
So if $\cal M_3\neq \emptyset,$ then $\cal M=\cal M_3$. By \cite[Lemma 1]{cohn}, there exists $M\in \cal M_3$ with $\sigma(G)\geq |G:M|+1$;
however $|G:M|=|S|$ for some nonabelian simple group $S$ which is an epimorphic image of $G$. This implies $\sigma(G)\leq \sigma(S)\leq |S|=|G:M|\leq \sigma(G)-1,$ a contradiction.

Let $\overline H_1=H_1\times 1$ and $\overline H_2=1\times H_2.$ We have proved that there exists $j\in \{1,2\}$, such that $\overline H_j\leq \bigcap_{1\leq i \leq \sigma}M_i.$ In particular $\overline H_jX_i\leq M_i$
for each $i\in\{1,\dots,\sigma\}$ hence, by Lemma \ref{effe}, we can conclude $\overline H_j\leq \bigcap_{1\leq i \leq \sigma}X_i.$
\end{proof}

\end{document}